\theoremstyle{thmstyleone}%
\newtheorem{theorem}{Theorem}%
\newtheorem{proposition}[theorem]{Proposition}%
\theoremstyle{thmstyletwo}%
\newtheorem{example}{Example}%
\newtheorem{remark}{Remark}%
\newtheorem{conjecture}{Conjecture}%
\theoremstyle{thmstylethree}%
\definecolor{codebgcolour}{rgb}{0.94,0.94,0.96}
\definecolor{codegreen}{rgb}{0,0.5,0}
\definecolor{codemagenta}{rgb}{0.9,0,0.5}
\newcommand{\is}[1]{{\ttfamily\seqsplit{#1}}} 
\newtcolorbox[auto counter,number within=section]{procbox}[1]{
	enhanced,
	breakable,
	colframe=blue!75!green!75,
	fonttitle=\bfseries\boldmath,
	title={#1}
}
\newcommand{\twprod}{\mathbin{
		\ooalign{\raise1.15ex\hbox{$\scriptstyle\sim$}\cr\hidewidth$\times$\hidewidth\cr}
}}
\newcommand{\tri}{\mathcal{T}}
\newcommand{\bz}{\mathbb{Z}}
\newcommand{\lk}{\mathrm{lk}}
\newcommand{\cpp}{\mathbb{C}P^2}
\newcommand{\Chains}{C}
\newcommand{\Boundaries}{B}
\newcommand{\Cycles}{Z}
\theoremstyle{remark}
\newtheorem{question}{Question}
\newtheorem{housekeeping}{Housekeeping}
\begin{document}

\title[The Census of 4-Manifold Triangulations]{Small Triangulations of 4-Manifolds and the 4-Manifold Census}

\author[1]{\fnm{Rhuaidi} \sur{Burke}}\email{rhuaidi.burke@uq.edu.au}

\author[1]{\fnm{Benjamin} \sur{Burton}}\email{bab@maths.uq.edu.au}

\author[2]{\fnm{Jonathan} \sur{Spreer}}\email{jonathan.spreer@sydney.edu.au}

\affil[1]{\orgdiv{School of Mathematics and Physics}, \orgname{The University of Queensland}, \orgaddress{\city{Brisbane}, \postcode{4072}, \state{Queensland}, \country{Australia}}}

\affil[2]{\orgdiv{School of Mathematics and Statistics}, \orgname{The University of Sydney}, \orgaddress{\city{Sydney}, \postcode{2006}, \state{New South Wales}, \country{Australia}}}

\abstract{We present a framework to classify PL-types of large censuses of triangulated $4$-manifolds, which we use to classify the PL-types of all triangulated $4$-manifolds with up to six pentachora. This is successful except for triangulations homeomorphic to the $4$-sphere, $\mathbb{C}P^2$, and the rational homology sphere $QS^4(2)$, where we find at most four, three, and two PL-types respectively. We conjecture that they are all standard. In addition, we look at the cases resisting classification and discuss the combinatorial structure of these triangulations---which we deem interesting in their own rights.}

\keywords{computational low-dimensional topology, triangulations, census of triangulations, 4-manifolds, PL standard 4-sphere, Pachner graph, mathematical software, experiments in low-dimensional topology}

\pacs[MSC Classification]{57-04, 57-08, 57-11, 57Q15, 57Q05, 57Q25, 57K40, 57Q70, 55U10, 57R05, 57K41}

\maketitle

\section{Introduction}\label{sec:intro}	
In the context of computational topology it is desirable to have a data set of examples on which to perform experiments and test hypotheses. As such, an exhaustive list of all $4$-manifold triangulations of a certain size and type, called a {\em census}, serves as such a useful reference. In dimension three we currently have censuses available with up to $11$ tetrahedra (containing $13\,400$ closed, orientable, prime, minimal triangulations). In dimension $4$, where a closed $4$-manifold requires an even number of pentachora, censuses are currently limited to triangulations with $2$, $4$, and $6$ pentachora. 
However, despite this relative scarcity of data, even with just three tiers of the census, there are a total of $441\,287$ triangulations to consider.

Given a smooth manifold $X$, another smooth manifold $X'$ is called {\em exotic (with respect to $X$)}, if $X'$ is homeomorphic but not diffeomorphic to $X$. In other words, $X$ and $X'$ represent the same topological manifold but are distinct as smooth manifolds. Dimension four is the first dimension in which exotic smooth structures appear. It is a fundamental problem in $4$-manifold topology to determine the number of smooth structures a particular $4$-manifold admits. The simplest and most famous instance of this problem is the \emph{Smooth $4$-Dimensional Poincar\'e Conjecture} (S4PC for short):

    \begin{conjecture}[S4PC]
        There do not exist exotic $4$-spheres.
    \end{conjecture}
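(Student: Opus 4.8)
The plan is to work entirely in the PL category, which is justified because in dimension four the categories PL and DIFF coincide: every PL $4$-manifold carries a smooth structure, unique up to diffeomorphism, and conversely. Under this dictionary an exotic smooth $4$-sphere is exactly a PL $4$-sphere---equivalently, a triangulated homotopy $4$-sphere---that fails to be PL-homeomorphic to the standard $S^4$. So it suffices to show that every triangulated homotopy $4$-sphere is PL-homeomorphic to the standard two-pentachoron triangulation of $S^4$.

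First I would import the topological input: by Freedman's solution to the topological $4$-dimensional Poincar\'e conjecture, any such manifold is already TOP-homeomorphic to $S^4$, so the entire difficulty is to upgrade a homeomorphism to a PL-homeomorphism (equivalently a diffeomorphism). In the combinatorial language of this paper, Pachner's theorem tells us that two triangulations are PL-homeomorphic if and only if they are connected by a finite sequence of bistellar flips; hence, among triangulations of homotopy $4$-spheres, the Pachner-connected components correspond precisely to PL-homeomorphism types, and the conjecture is equivalent to the assertion that the Pachner graph on all such triangulations is connected, with the standard $S^4$ reachable from every vertex. The natural strategy is then to produce, for an arbitrary triangulation $\tri$, an explicit simplification scheme---some monovariant, for instance the number of pentachora together with a secondary complexity measure, that can always be strictly decreased by a bounded local move until one lands on the two-pentachoron sphere.

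The hard part---indeed the reason the conjecture remains open---is that no such scheme is known to terminate, and more fundamentally there is no invariant available to even certify success. The gauge-theoretic tools that detect exotic smooth structures elsewhere in dimension four, namely the Donaldson and Seiberg--Witten invariants, all vanish on a manifold with $b_2=0$, so they provide no obstruction to a hypothetical exotic $S^4$ and equally no leverage to rule one out. Consequently any proof must either exhibit a genuinely new smooth or PL invariant sensitive to homology $4$-spheres, or establish a global handle-cancellation (equivalently Pachner-connectivity) result directly; the latter is entangled with the notoriously difficult Andrews--Curtis and Generalized Property R conjectures, which are themselves open and suspected by some to be false. I therefore expect the main obstacle to be precisely the \emph{absence of a detecting invariant}: without one, the simplification strategy above carries no certificate that a stuck triangulation is genuinely exotic rather than merely awkwardly presented---which is exactly the gap that the census assembled in this paper probes empirically rather than resolves.
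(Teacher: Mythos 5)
This statement is labelled a \emph{conjecture} in the paper precisely because no proof is known; the paper offers no proof of it, only empirical evidence from the census, so there is nothing for your argument to match. Your reduction steps are all sound: the PL/DIFF equivalence in dimension four, Freedman's topological Poincar\'e conjecture, and Pachner's theorem together do correctly reformulate S4PC as the assertion that all triangulations of homotopy $4$-spheres lie in a single connected component of the Pachner graph containing the standard $\partial\Delta^5$. But the load-bearing step --- ``produce \ldots an explicit simplification scheme, some monovariant \ldots that can always be strictly decreased by a bounded local move'' --- is never exhibited, and you say as much yourself in the final paragraph. A plan whose central lemma is acknowledged to be unknown is not a proof; it is a (correct) restatement of why the problem is open.

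Worth noting concretely: the paper's own data rules out the most naive version of your proposed monovariant. The $4$-pentachoron sphere $S^4_C$ cannot be connected to any other $4$-pentachoron triangulation of $\mathbb{S}^4$ without passing through a triangulation of at least $12$ pentachora (the Proposition in Section 5), so no simplification scheme that monotonically decreases the number of pentachora, even with bounded excess, can succeed in general. Any genuine attack would have to either supply a new PL invariant detecting exotica on homology spheres (where, as you note, Donaldson and Seiberg--Witten theory are silent), or prove an unconditional connectivity result for the Pachner graph --- and your own text identifies both as open. So the gap is not a fixable omission; it is the conjecture itself.
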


It is typical to discuss exotic structures in relation to a `standard' or canonical smooth structure. For example, the standard structure on $\mathbb{R}^n$ is the one given by a single chart with the identity map; on $\mathbb{S}^n$ it is the one with two charts given by stereographic projection.

\begin{remark}
The concept of a standard smooth structure is not well-defined in isolation. For instance, let $K3$ be the {\em $K3$ surface} with the standard smooth structure coming from being the zero-set of $w^4+x^4+y^4+x^4=0$ in $\mathbb{C}P^3$, and let $\mathbb{C}P^2$ be the standard complex projective plane. Then each of the connected sums\footnote{Given two $d$-manifolds $X$ and $Y$, their {\em connected sum}, written $X\# Y$ is formed by removing an open $d$-ball from each of $X$ and $Y$ and gluing them together along their resulting boundaries. We use $\#_k X$ to denote the $k$-fold connected sum of $X$ with copies of itself. The operation is independent of the $d$-ball removed from their summands.} $X_1 = \#_{3} \cpp \#_{20} \overline{\cpp}$ and $X_2 = K3 \# \overline{\cpp}$ inherits a standard smooth structure from their summands. It is known that $X_1\cong_{\mathrm{TOP}}X_2$ but $X_1\not\cong_{\mathrm{DIFF}}X_2$ \cite{kronheimerMrowka-K3CP2}, meaning $X_1$ is exotic with respect to $X_2$ and vice versa. The smallest triangulations of two $4$-manifolds that are known to be homeomorphic, but not diffeomorphic, are {\em ideal triangulations} (i.e.\ triangulations of manifolds with non-empty boundary where the boundary is given by a neighbourhood of a vertex) with $10$ pentachora \cite{Burke-SoftwareJoCGver}. The authors are not aware of any triangulations of closed $4$-manifolds which are homeomorphic, but not diffeomorphic, and which are not simply formed from a connected sum of ‘standard’ well-known manifolds (such as $X_1$ and $X_2$ above). 
\end{remark}

Cairns \cite{Cairns-TriangulationsC1-35,Cairns-Triangulations61} and Whitehead \cite{Whitehead-C1complexes} show that every smooth $n$-manifold can be triangulated, that is, admits a piecewise-linear (PL) structure. Moreover, every PL $n$-manifold for $n\leq 6$ admits a compatible smooth structure which is unique up to diffeomorphism~\cite{HirschMazur,Munkres-Smoothing}. Hence, there is a bijective correspondence between isotopy classes of smooth and PL structures on 4-manifolds, and so we speak of smooth and PL structures on a 4-manifold interchangeably. 

\paragraph*{Related Work.} The past decade has seen significant interest in the problem of enumerating and classifying triangulations, including several other attempts at classifying and simplifying triangulations of $4$-manifolds, particularly $4$-spheres. Most recently, P\'erez-Cerezo \cite{perez}, building on the work of Joswig et al.\  \cite{joswigLofanoLutzTsuruga-sphereRecog}, analysed the same census we investigate in this paper. 

More broadly, there has been a growing body of utilising Pachner moves to either simplify triangulations or establish PL-homeomorphisms. For example, the work Bj\"orner and Lutz \cite{lutzBistellarFlips}, Lutz and Tsuruga \cite{tsurugaLutz2013constructingcomplicatedspheres}, the second author \cite{burton2011pachner}, Altmann and the third author \cite{AltmannMCMC}, the second and third author \cite{burtonSpreerK3yrf}, and more recently the first author \cite{Burke-SoftwareJoCGver}. We note that whilst Bj\"orner, Lutz, Joswig et al., and Tsuruga, and (to some extent) P\'erez-Cerezo use \emph{simplicial complexes}, the work of the authors and Altmann instead use \emph{generalised triangulations} (see \Cref{sec:prelims-triangulations}). These are more `flexible' than simplicial complexes since, for example we allow two facets of the same simplex to be identified.

\paragraph*{Contributions.} We study the census of triangulations of closed, orientable, $4$-manifolds with up to $6$ pentachora. 
This data set contains 8 triangulations with 2 pentachora, 784 triangulations with 4 pentachora, and 440 495 triangulations with 6 pentachora.
It includes a variety of triangulations with interesting combinatorial and topological features (see \Cref{tab:2p-TOPclassTable,tab:4p-TOPclassTable,tab:6p-TOPclassTable} for a breakdown of numbers of triangulations by (PL-)topological types). 
The dataset was first generated by Budney and the second author \cite{census} using \texttt{tricensus}, a utility of \cite{Regina}. 

In addition, we present a new algorithm for finding PL-homeomorphisms between large numbers of $4$-manifold triangulations by combining original heuristics and computational tools. This algorithm yields a near complete classification of the PL-types in the census. 
We note that, due to the undecidability of the $4$-manifold homeomorphism problem \cite{Markov58}, we can only hope for heuristics which for as many cases as possible, give the correct answer, in as short a time as possible.
We also investigate the combinatorial structure of the triangulations resisting classification.

Specifically, we improve on the results of P\'erez-Cerezo, reducing the upper bounds on the number of potential PL classes for $\cpp$ and $\mathbb{S}^4$: within the set of 6-pentachoron triangulations homeomorphic to $\cpp$, we reduce the number of potential PL classes from 5 down to 3; within the set of 4-pentachoron triangulations homeomorphic to $\mathbb{S}^4$, we reduce from 3 to 2; and finally for the set of 6-pentachoron triangulations homeomorphic to $\mathbb{S}^4$, we obtain at most 4 potential classes, down from 36 in \cite{perez}.

\paragraph*{Acknowledgements.} The authors would like to thank Ryan Budney for his contributions at an early stage of this project. The authors would also like to thank the anonymous referees for insightful comments that improved the presentation of this paper. This paper was finished whilst the first author was visiting the University of Sydney, and thanks the University of Sydney for their hospitality. The third author is supported by the Australian Research Council’s Discovery funding scheme (project no. DP190102259).

\section{Preliminaries}\label{sec:bg}

In \Cref{sec:prelims-triangulations} and \Cref{sec:prelims-localMoves} we review some of the basic theory of (generalised) triangulations and local moves on triangulations. In \Cref{sec:prelims-4mflds} and \Cref{sec:prelims-handles} we recap some of the classical results in $4$-manifold theory and handle decompositions.  
        		
\subsection{Triangulations}\label{sec:prelims-triangulations}

We refer to a 4-simplex as a \emph{pentachoron} (plural:\ \emph{pentachora}). The tetrahedral cells of a pentachoron are referred to as \emph{facets}. We label the vertices of a pentachoron by elements of $\{0,1,2,3,4\}$, and use the convention that facet $i$ refers to the facet opposite to the vertex labelled by $i$. A \emph{(generalised, $4$-dimensional) triangulation} $\tri$ is a finite collection of $n$ abstract pentachora, some or all of whose $5n$ facets are affinely identified (`glued') in pairs. More precisely, let $\widetilde{\Delta}=\{\Delta_0,\Delta_1,\ldots,\Delta_{n-1}\}$ be a set of $n$ pentachora, and let $\Phi=\{\varphi_0,\ldots,\varphi_{m-1}\}$ be a set of at most $m\leq 2n$ \emph{face gluings}, such that each $\varphi_i$ is an affine identification between two distinct facets of simplices, and each facet is a part of at most one such identification. We write $\Delta_i(a)$ to denote vertex $a$ of pentachoron $\Delta_i$, and $\Delta_i(abcd)$ to denote the facet ${abcd}$ of $\Delta_i$ (and analogous notation for edges and triangles). A face gluing $\varphi\in\Phi$ is then explicitly described by an expression of the form $\Delta_i({abcd})\leftrightarrow \Delta_j({efgh})$, which means that facet ${abcd}$ of $\Delta_i$ is mapped to facet ${efgh}$ of $\Delta_j$ such that $a\leftrightarrow e$, $b\leftrightarrow f$, $c\leftrightarrow g$, and $d\leftrightarrow h$. Our triangulation is defined to be $\mathcal{T}:=\widetilde{\Delta}/\Phi$ the identification space obtained under the natural quotient map $q:\widetilde{\Delta}\to \mathcal{T}$. In other words, the data specifying a triangulation are the triple $(\widetilde{\Delta},\Phi,q)$. The \emph{(real) boundary} of $\tri$ consists of all the facets that are not identified with any other facets. Figure \ref{fig:triangulations} depicts a typical triangulation.

\begin{remark}
	Such triangulations are also referred to as an \emph{unordered $\Delta$-complex}.
\end{remark}

\begin{remark}
These triangulations are typically far more efficient than simplicial complexes since, for example, we allow two facets of the same pentachoron to become identified.
\end{remark}

\begin{remark}
	Our triangulations as defined above are not \emph{a priori} triangulations ``of something'' (for example a manifold); they are, initially at least, purely abstract combinatorial objects. One must check certain conditions hold to conclude that the triangulation has the structure of a manifold (for example, that the link---defined below---of each vertex is homeomorphic to either $\mathbb{S}^3$ or $B^3$).
\end{remark}

The gluings defining $\tri$ also have the effect of merging vertices, edges, triangles, and tetrahedra of the pentachora into equivalence classes, which we refer to as the \emph{vertices}, \emph{edges}, \emph{triangles}, and \emph{tetrahedra} of $\tri$.	

The \emph{link} $\lk(v)$ of a vertex $v$ of $\tri$, is the `frontier' of a small regular neighbourhood of $v$. We treat vertex links as triangulated $3$-dimensional spaces, formed by inserting a small tetrahedron into each corner of each pentachoron, and then joining together the tetrahedra from adjacent pentachora along their triangular faces. This mirrors the traditional concept of a link in a simplicial complex, but is modified to support generalised triangulations.

If $v$ lies in the boundary of $\tri$, its link is a space with boundary. If the link is homeomorphic to $B^3$, then we refer to $v$ as a \emph{boundary vertex}. If the link is any other space with boundary then we say $v$ is an \emph{invalid vertex}. On the other hand, if $v$ does not lie in the boundary of $\tri$, its link is a closed space. If the link is (PL-)homeomorphic to $\mathbb{S}^3$, then $v$ is an \emph{internal vertex}, and if it is any other closed space then it is referred to as an \emph{ideal vertex}.

We insist that no edge of $\tri$ is identified with itself in reverse, and that no triangle is identified with itself via a non-identity permutation. This, together with the condition that the link of every vertex is either $B^3$ or $\mathbb{S}^3$, guarantees that $\tri$ triangulates a $4$-manifold.

Given a triangulation $\tri$ of a $4$-manifold, the vector $f(\tri) = (f_0, f_1, f_2, f_3, f_4)$, where $f_i$ denotes the number of $i$-dimensional faces in $\tri$, is called its {\em face vector}, or {\em $f$-vector} for short. Since $\tri$ triangulates a $4$-manifold, it satisfies the following equations
\begin{eqnarray}
    -2f_3+5f_4 &=& 0 \label{eq:ds1}\\
    2f_1 - 3f_2 + 4f_3 - 5f_4 &=& 0 \label{eq:ds2}\\
    f_0 - f_1 + f_2 - f_3 + f_4 &=& \chi(\tri) \label{eq:ds3}
\end{eqnarray}
known as {\em generalised Dehn--Sommerville equations} \cite{dehnSommervilleEqns}. Here, $\chi(\tri)$ denotes the {\em Euler characteristic} of $\tri$, a topological invariant. Observe that Equation \eqref{eq:ds1} implies that a triangulation of a closed $4$-manifold must have an even number of pentachora.

To encode a triangulation, we give each pentachoron a label and an ordering of its five vertices. Two triangulations are \emph{(combinatorially) isomorphic} if they are identical up to relabelling of pentachora and/or reordering of the pentachoron vertices. We can uniquely identify any isomorphism class of triangulations using an efficiently-computable string called an \emph{isomorphism signature} \cite{burton2011pachner}. Every triangulation has a unique isomorphism signature, and two triangulations have the same signature if and only if they are isomorphic.

An important tool in the study of triangulations is the \emph{dual graph} (also known as the \emph{face pairing graph}). Given a triangulation $\tri$, its dual graph $\Gamma(\tri)) = (V,E)$ is the multigraph whose nodes are the pentachora in $\tri$, and for each gluing that identifies two tetrahedral facets of $p_i$ and $p_j$ we add an arc between the corresponding nodes in $V$. By construction, $\Gamma(\tri)$ has maximum degree $\leq 5$, and is $5$-regular when $\tri$ triangulates a closed $4$-manifold~$\Gamma(\tri)$. Figure \ref{fig:dualGraphs} depicts a typical dual graph.

The dual graph as defined above does not retain any information about the permutations used in the face identifications, and so $\tri$ cannot be reconstructed from $\Gamma(\tri)$ alone. Nevertheless, some information about the underlying topology of the $4$-manifold can still be extracted from $\Gamma(\tri)$. It also proves to be useful as a visual tool when discussing triangulations. 

\begin{figure}[h]
	\centering
	\begin{subfigure}{\textwidth}
	\centering
	\includegraphics{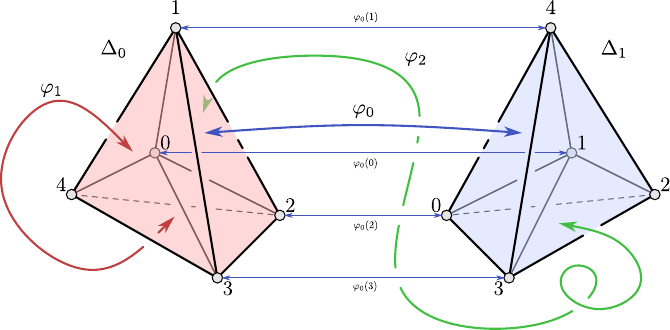}
	\caption{$\mathcal{T}$}
	\label{fig:triangulations}
	\end{subfigure}\\
	\hfill
	\begin{subfigure}{\textwidth}
		\centering
		\includegraphics{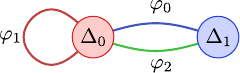}
		\caption{$\Gamma(\mathcal{T})$}
		\label{fig:dualGraphs}
	\end{subfigure}%
	\caption{(a) A triangulation $\mathcal{T}=\widetilde{\Delta}/\Phi$ with two pentachora $\widetilde{\Delta}=\{\Delta_0,\Delta_1\}$ and three face gluings $\Phi=\{\varphi_0,\varphi_1,\varphi_2\}$. The map $\varphi_0$ is given by $\Delta_0(0123)\xrightarrow{\varphi_0}\Delta_1(1403)$. (b) The dual graph $\Gamma(\mathcal{T})$ of the triangulation $\mathcal{T}$.}
\end{figure}

\begin{housekeeping}
In order to avoid confusion, we use the terms \emph{vertices} and \emph{edges} exclusively when referring to triangulations, and \emph{nodes} and \emph{arcs} when referring to graphs.         
\end{housekeeping}

\subsection{Local Moves and the Pachner Graph}\label{sec:prelims-localMoves}
We modify our triangulations using \emph{Pachner moves} (or \emph{bistellar flips}), which are local moves that change a triangulation but not its underlying PL-type \cite{pachner1987}. Informally, an $(i,j)$ Pachner move can be thought of as taking a subcomplex of $i$ pentachora in the boundary comlpex of the $5$-simplex $\partial \Delta^5$, and replacing them with its complement in $\partial \Delta^5$ of $6-i = j$ pentachora. This necessarily means that {\em (a)} the $i$ pentachora share a common $(5-i)$-dimensional face -- which is removed from the triangulation; and {\em (b)} the $j$ pentachora share a common $(i-1)$-face -- which is newly inserted into the triangulation. See \Cref{fig:pachner} for an illustration of Pachner moves in dimension four, and see \cite{pachner1987} for more details.

The use of Pachner moves has several key benefits: Firstly, two PL manifolds $X$, and $X'$ are PL-homeomorphic if and only if there exists a sequence of Pachner moves between $X$ and $X'$. This statement is known as {\em Pachner's theorem} and was first proven in \cite{pachner1987}. We can therefore certify that two triangulations are PL-homeomorphic by finding a connecting sequence of Pachner moves. Even without a guarantee that this algorithm will terminate, it is effective in practice. Secondly, Pachner moves can be used to reduce the size of a triangulation (without changing the underlying PL-type). Finally, Pachner moves are already implemented and ready-to-use in several software packages. Here, we use \emph{Regina} \cite{Regina}.
    
The \emph{Pachner graph} of a PL manifold $X$, denoted by $\mathscr{P}(X)$, is used to describe and track how distinct triangulations of a $4$-manifold can be related via Pachner moves. It is the (infinite) graph with nodes corresponding to isomorphism classes of triangulations of $X$; and two nodes of $\mathscr{P}(X)$ are joined by an arc if and only if there is a single Pachner move that takes one triangulation to the other. We refer the reader to \cite{burton2011pachner} for further details.

\begin{figure}[ht]
    \centering
    \includegraphics[width=\linewidth]{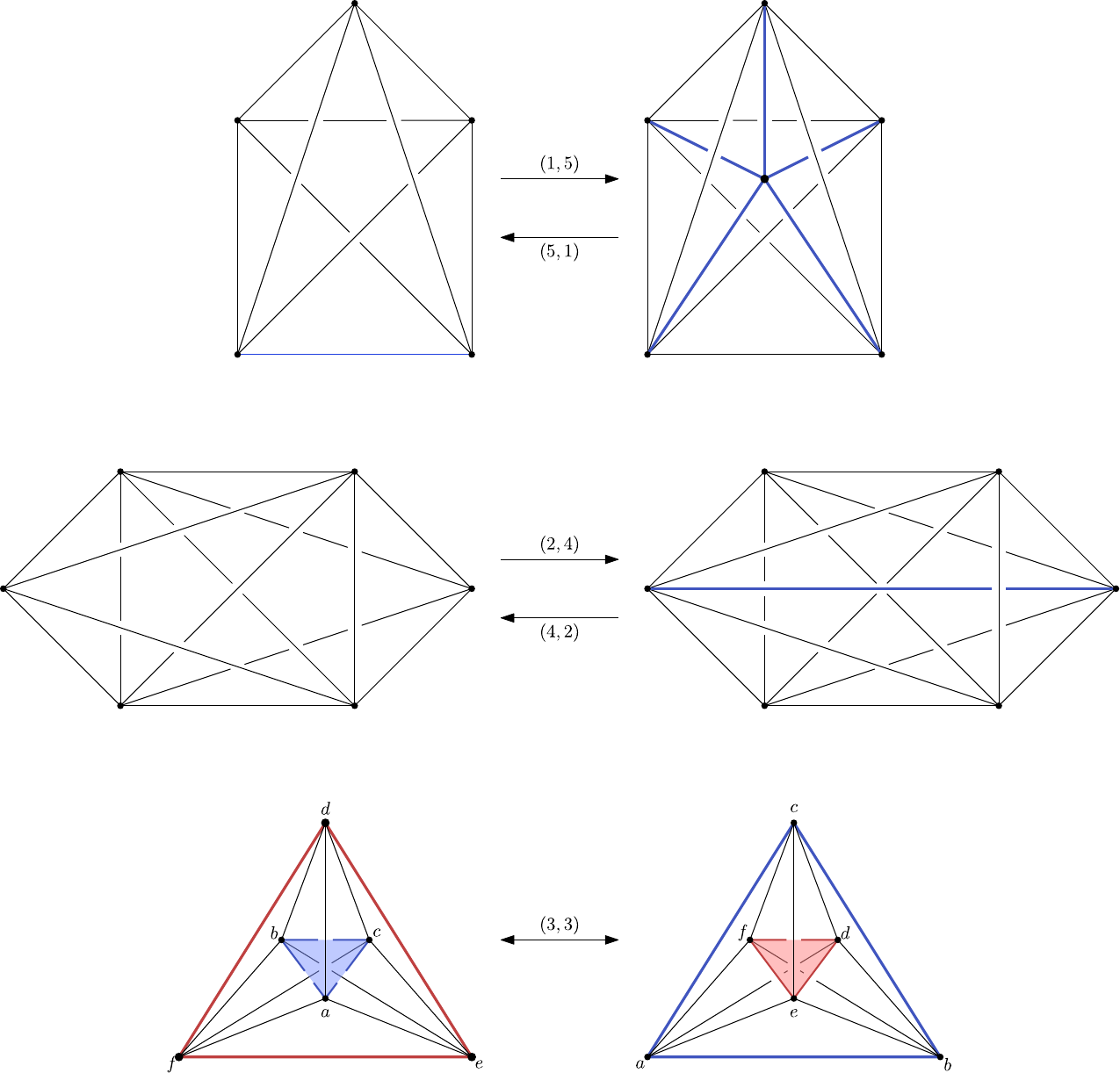}
    \caption{Pachner moves and their inverses in dimension 4}
    \label{fig:pachner}
\end{figure}

There are many other local modifications which can be expressed by sequences of Pachner moves. When moving through the Pachner graph, using these additional modifications can be powerful, since they allow the use of `short-cuts' into different areas of the graph. In this article, we make extensive use of two of these additional moves, the \emph{$2$-$0$-Edge move} and the \emph{$2$-$0$-Triangle move}. The $2$-$0$-Edge move takes two pentachora, identified along three tetrahedra to form a `pillow' around a common edge, and flattens them to form two tetrahedra. The $2$-$0$-Triangle move is similar. Here, two pentachora, are identified along two tetrahedra to form a pillow around a common triangle, and flattens them to form three tetrahedra.

\subsection{4-Manifolds}\label{sec:prelims-4mflds}

Let $\tri$ be a $4$-manifold triangulation. For the {\em ring of coefficients} $\bz$, the {\em group of $p$-chains}, $0 \leq p \leq 4$, denoted $\Chains_p(\tri,\bz)$, of $\tri$ is the group of formal sums of $p$-dimensional faces with $\bz$ coefficients. The \emph{boundary operator} is a linear operator $\partial_p: \Chains_p(\tri,\bz) \rightarrow \Chains_{p-1}(\tri,\bz)$ defined by
\[
\partial_p \sigma = \partial_p \{v_0, \cdots , v_p\} = \sum_{j=0}^p \{v_0,\cdots ,\widehat{v_j}, \cdots,v_p\},
\]
where $\sigma$ is a face of $\tri$, $\{v_0, \ldots, v_p\}$ represents $\sigma$ as a face of a pentachoron of $\tri$ in local vertices $v_0, \ldots, v_p$, and $\widehat{v_j}$ means $v_j$ is deleted from the list. 

Denote by $\Cycles_p(\tri,\bz)$ and $\Boundaries_{p-1}(\tri,\bz)$ the kernel and the image of $\partial_p$ respectively. Observing $\partial_p \circ \partial_{p+1}=0$, we define the {\em $p$-th homology group} $H_p(\tri,\bz)$ of $\tri$ by the quotient $H_p(\tri,\bz) = \Cycles_p(\tri,\bz)/ \Boundaries_p(\tri,\bz)$. Each homology group $H_p$ is a finitely generated $\mathbb{Z}$-module, and is a topological invariant of the manifold triangulated by $\tri$. We write $\beta_p=\dim(H_p(\tri,\bz))$ for the dimension of (the free part of) $H_p$. Informally, $H_p(\tri,\bz)$, $0 \leq p \leq 4$, of a triangulation $\tri$ counts the number of `$p$-dimensional holes' in $\tri$. For a more thorough introduction to homology theory see \cite{hatcherAT}.

Let $X$ be a closed, oriented $4$-manifold. Representatives of classes in $H_2(X;\bz)$ generically intersect in a finite number of points (possibly after isotoping them into transverse position). The \emph{intersection form} $Q_X$ of $X$, is the symmetric, unimodular, bilinear form defined by
\[
Q_X:H_2(X;\bz)\times H_2(X;\bz)\to\bz,\quad Q_X(\alpha,\beta)=S_\alpha\cdot S_\beta:=\sum_{S_\alpha\cap S_\beta}\pm 1,
\]
where $S_\alpha$, and $S_\beta$ are $2$-chains representing the classes $\alpha,\beta\in H_2(X;\bz)$. If $X$ is smooth, $S_\alpha$, and $S_\beta$ can be chosen to be oriented surfaces embedded in $X$ \cite[Proposition~1.2.3]{GompfStipsicz}. 
    
A landmark result of $4$-manifold topology is the following classification result for simply connected (i.e.\ having trivial fundamental group\footnote{The \emph{fundamental group} of a manifold $X$, denoted $\pi_1(X)$, describes closed paths in $X$ up to homotopy, with the group operation of path concatenation.}) topological $4$-manifolds due to Freedman.
    
\begin{theorem}[Freedman \cite{Freedman-TopOf4Mflds}]\label{thm:freedman}
For every symmetric, unimodular, bilinear form $Q$, there exists a closed simply connected topological $4$-manifold $X$ such that $Q_X=Q$. If $Q$ is even, this manifold is unique up to homeomorphism. If $Q$ is odd then are exactly two different homeomorphism types of manifolds with the given intersection form, however only at most one of these admits a PL structure. 
\end{theorem}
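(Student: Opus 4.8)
The plan is to split the statement into an existence part (every unimodular symmetric bilinear form $Q$ is realised by some closed, simply connected topological $4$-manifold) and a uniqueness/counting part (how many homeomorphism types share a given $Q$, and which of them can carry a PL structure). The guiding principle throughout is that for simply connected closed $4$-manifolds the pair consisting of the intersection form $Q_X$ together with the Kirby--Siebenmann invariant $ks(X)\in\bz/2$ is a complete homeomorphism invariant; once this is established, the three clauses of the theorem follow by bookkeeping on which pairs $(Q,ks)$ actually occur.

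For existence, first I would realise $Q$ by a smooth handlebody: attach $2$-handles to a $4$-ball $B^4$ along a framed link $L\subset\mathbb{S}^3=\partial B^4$ whose linking matrix is $Q$. Because $Q$ is unimodular, the resulting $4$-manifold $W$ has $H_2(W;\bz)\cong\bz^n$ with intersection form $Q$, and its boundary $\partial W=\Sigma$ is a homology $3$-sphere. The essential input from Freedman's work is then that every homology $3$-sphere bounds a contractible topological $4$-manifold $C$; gluing $C$ to $W$ along $\Sigma$ caps off the boundary without changing $H_2$ or $\pi_1$, producing a closed, simply connected topological $4$-manifold $X$ with $Q_X=Q$. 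In the special cases where $Q$ is smoothly realisable -- for instance diagonal forms, realised by connected sums of $\cpp$ and $\overline{\cpp}$ -- this capping is unnecessary; but in general, e.g.\ for $Q=E_8$, it is genuinely topological, since by Rokhlin's theorem $E_8$ is not realised by any smooth manifold.

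For uniqueness, given $X_1,X_2$ with $Q_{X_1}=Q_{X_2}=Q$, I would first produce a homotopy equivalence $f\colon X_1\to X_2$: simply connected closed $4$-manifolds with isometric intersection forms are homotopy equivalent (a classical result of Whitehead and Milnor), so the homological data already pins down the homotopy type. The real work is to upgrade $f$ to a homeomorphism via surgery theory, i.e.\ to show that the normal invariant of $f$ can be killed and that the resulting surgery obstruction vanishes. \emph{This is the main obstacle, and it is the deep content of Freedman's theorem:} the relevant Whitney disks must be embedded topologically, which in dimension $4$ fails for smooth or PL disks but is rescued by Freedman's disk embedding theorem, proving that Casson handles are homeomorphic to standard open $2$-handles. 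I would not expect to reprove this; rather, I would invoke it as the engine that makes the topological Whitney trick, and hence the surgery-theoretic classification, go through.

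Finally, to count types and identify the PL ones, I would analyse the admissible values of $ks\in\bz/2$ for a fixed $Q$. Rokhlin's theorem forces $ks(X)\equiv\sigma(Q)/8\pmod 2$ whenever $Q$ is even (where $\sigma(Q)$ denotes the signature), so $ks$ is determined and $X$ is unique. When $Q$ is odd, both values $ks=0$ and $ks=1$ are realised, giving exactly two homeomorphism types. Since in dimension $4$ the Kirby--Siebenmann invariant is precisely the obstruction to the existence of a PL structure (equivalently a smooth one, as recalled in the introduction), the type with $ks=1$ admits no PL structure; hence at most one of the two odd-form types can be PL, which is the final clause.
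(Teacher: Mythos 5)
This is a quoted classical result: the paper offers no proof of \Cref{thm:freedman}, it simply cites Freedman's work, so there is no internal argument to compare yours against. Taken on its own terms, your outline is a faithful and essentially correct summary of the standard proof. The existence step (realise $Q$ by a $2$-handlebody on a framed link with linking matrix $Q$, note the boundary is a homology $3$-sphere by unimodularity, and cap off with a contractible topological $4$-manifold) and the uniqueness step (homotopy equivalence from the isometry of forms via Whitehead--Milnor, upgraded to a homeomorphism by topological surgery, with the disk embedding theorem as the engine) are exactly the architecture of Freedman's argument, and you are right to flag the disk embedding theorem as the piece that cannot be reproved in passing. The bookkeeping via the Kirby--Siebenmann invariant is also correct: for odd $Q$ both values of $ks$ occur, and $ks=1$ obstructs any PL (equivalently smooth, in this dimension) structure, which gives precisely the ``at most one'' of the final clause.

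Two small caveats. First, your sketch is of course not self-contained --- the capping-off of homology spheres, the $4$-dimensional topological surgery sequence for trivial fundamental group, and the realisation of both $ks$ values for odd forms are all themselves substantial theorems being invoked --- but that is unavoidable for a result of this depth and is consistent with how the paper treats it (as a black box). Second, the phrase ``Rokhlin's theorem forces $ks(X)\equiv\sigma(Q)/8\pmod 2$ whenever $Q$ is even'' compresses two things: the identity $ks=\sigma/8\bmod 2$ for closed spin topological $4$-manifolds is the \emph{topological generalisation} of Rokhlin's theorem (itself a consequence of Freedman's classification), not the classical smooth statement; stated as is, it slightly begs the question in the even case. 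The conclusion is nonetheless correct.
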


Shortly after Freedman's result, Donaldson showed the following equally important result.

\begin{theorem}[Donaldson \cite{Donaldson}]\label{thm:donaldson}
    The symmetric unimodular bilinear form $\oplus_m[+1]$ is the only positive definite form that can be realised as the intersection form of a PL $4$-manifold.
\end{theorem}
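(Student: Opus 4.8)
This is Donaldson's theorem, and no elementary or combinatorial proof is known: every argument rests on gauge theory, and in practice one simply cites \cite{Donaldson}. The strategy I would follow is Donaldson's original one, via instanton moduli spaces. Fix the smooth structure on the underlying PL manifold $X$ (which exists and is unique in dimension four by the smoothing theorems already invoked) and, reversing orientation if necessary, normalise so that $Q = Q_X$ is \emph{negative} definite of rank $m$; the asserted conclusion $Q \cong \oplus_m[+1]$ for the positive definite case is recovered at the end by reversing orientation back. Assuming $b_1(X) = 0$ for concreteness, I would equip $X$ with a generic Riemannian metric and form the moduli space $\mathcal{M}$ of anti-self-dual (ASD) connections, modulo gauge, on a principal $SU(2)$-bundle $P \to X$ with $c_2(P) = 1$. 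For a generic metric, transversality makes $\mathcal{M}$ a smooth oriented manifold away from finitely many singular points, of dimension $8c_2(P) - 3(1 + b_2^+) = 8 - 3 = 5$, since $b_2^+ = 0$.

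The argument then hinges on two structural facts, one describing the ends of $\mathcal{M}$ and one its singularities. For the ends, Uhlenbeck compactification together with Taubes' gluing construction shows that the only noncompactness comes from instantons concentrating (``bubbling'') at points of $X$, producing a single end of $\mathcal{M}$ diffeomorphic to a half-open collar $X \times (0, \lambda_0)$; thus the compactified space $\overline{\mathcal{M}}$ acquires a boundary component equal to a copy of $X$. For the singularities, the singular points of $\mathcal{M}$ are exactly the \emph{reducible} ASD connections, arising from reductions of the associated $\mathbb{C}^2$-bundle as $L \oplus L^{-1}$, and so corresponding bijectively up to sign to classes $e \in H^2(X;\bz)$ with $Q(e,e) = -1$; a neighbourhood of each reducible is modelled on a cone over $\cpp$, so its link contributes a boundary copy of $\cpp$.

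Assembling these, I would delete from $\overline{\mathcal{M}}$ an open collar of the $X$-end and an open cone neighbourhood of each of the $N$ reducibles. What remains is a compact oriented $5$-manifold $W$ with boundary $X \sqcup (\sqcup_{i=1}^{N} \cpp)$. Since the signature of the boundary of a compact oriented $5$-manifold vanishes and $\sigma(\cpp) = 1$, this forces $N = |\sigma(X)|$; as $Q$ is negative definite of rank $m$ we have $|\sigma(X)| = m$, so $N = m$. In other words, the lattice $(H^2(X;\bz), Q)$ contains exactly $m$ pairs $\pm e$ of vectors with $Q(e,e) = -1$.

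The proof closes with a purely arithmetic lemma: a negative definite unimodular integral form of rank $m$ possessing $m$ vectors $e$ with $Q(e,e) = -1$ forces those vectors to be linearly independent, pairwise orthogonal, and to generate the lattice over $\bz$ (the last point using unimodularity), whence $Q \cong \oplus_m[-1]$; reversing orientation yields $\oplus_m[+1]$, as claimed. The genuine difficulty is entirely analytic and lies in the second paragraph: establishing the transversality and orientability of $\mathcal{M}$, the precise collar structure of the bubbling end, and the conical model near each reducible are the hard theorems of Donaldson's gauge theory. I would cite these from \cite{Donaldson} rather than reprove them; by contrast, the cobordism bookkeeping and the closing lattice lemma are formal. (The general definite case, without the hypothesis $b_1(X) = 0$, is also due to Donaldson and rests on the same circle of gauge-theoretic ideas.)
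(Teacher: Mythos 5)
The paper does not prove this statement at all: it is an attributed classical theorem, and the authors simply cite \cite{Donaldson} (the result is used only as an ingredient, together with Freedman's theorem, to derive their Theorem~3). Your sketch is the standard outline of Donaldson's original gauge-theoretic argument, and it is essentially correct as a roadmap: the five-dimensional ASD moduli space for an $SU(2)$-bundle with $c_2=1$, the Taubes/Uhlenbeck collar end diffeomorphic to $X\times(0,\lambda_0)$, the cones on $\cpp$ at the reducibles indexed by pairs $\pm e$ with $Q(e,e)=-1$, the resulting oriented cobordism, and the closing lattice lemma are all the right ingredients, with the genuinely hard analytic content correctly identified and deferred to the citation. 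One piece of bookkeeping is slightly loose: the cobordism argument gives $\sigma(X)=\sum_i \varepsilon_i$ with $\varepsilon_i=\pm 1$ over the $N$ reducibles, hence only the inequality $N\geq|\sigma(X)|=m$, not $N=m$ directly; equality (and the diagonalisation) then follows because a negative definite unimodular rank-$m$ lattice admits at most $m$ such pairs, with equality forcing $Q\cong\oplus_m[-1]$. Since the paper offers no proof to compare against, the practical point is that citing \cite{Donaldson}, as the authors do, is the appropriate course here; your reconstruction is a faithful summary of what that citation contains.
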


The results by Freedman and Donaldson (together with Serre's algebraic classification of indefinite forms) imply the following key result.

\begin{theorem}\label{thm:simplyConnectedClassification}
    Two simply connected PL $4$-manifolds are homeomorphic if and only if their intersection forms have the same rank, signature, and parity.
\end{theorem}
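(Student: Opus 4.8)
The plan is to assemble the three cited ingredients---\Cref{thm:freedman} (Freedman), \Cref{thm:donaldson} (Donaldson), and Serre's classification of indefinite unimodular symmetric bilinear forms---into one equivalence, with the only genuine case analysis taking place among the definite forms. The forward direction is immediate: the intersection form is a homeomorphism invariant up to isomorphism of forms, so $X \cong_{\mathrm{TOP}} Y$ forces $Q_X \cong Q_Y$, and isomorphic forms share rank, signature, and parity.

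For the converse, assume $Q_X$ and $Q_Y$ agree in rank, signature, and parity. I would first upgrade this to an honest isomorphism $Q_X \cong Q_Y$ of forms, and then apply \Cref{thm:freedman} to conclude $X \cong_{\mathrm{TOP}} Y$. The point is that \Cref{thm:freedman} supplies precisely the needed rigidity: for an even form the homeomorphism type is unique, while for an odd form there are two topological types but only one carries a PL structure, so once we restrict to PL $4$-manifolds the homeomorphism type is again determined by the form alone. The simply connected hypothesis is what makes Freedman applicable and guarantees that $H_2$, and hence $Q_X$, captures the homeomorphism type.

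It remains to prove the purely algebraic statement that two unimodular symmetric bilinear forms which \emph{(i)} arise as intersection forms of PL $4$-manifolds and \emph{(ii)} share rank, signature, and parity are isomorphic. I would split on definiteness, which is detected by the invariants themselves: a form is definite exactly when $|\sigma| = \mathrm{rank}$ and indefinite when $|\sigma| < \mathrm{rank}$, so $Q_X$ and $Q_Y$ necessarily lie in the same case. In the indefinite case, Serre's theorem classifies such forms up to isomorphism by rank, signature, and parity, giving $Q_X \cong Q_Y$ directly. In the definite case Serre's theorem fails (there are many inequivalent definite even forms, such as $E_8$), and here \Cref{thm:donaldson} is essential: a positive definite form realised by a PL $4$-manifold must be $\oplus_m[+1]$, and since reversing orientation negates the intersection form, a negative definite one must be $\oplus_m[-1]$. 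In either sub-case the form is $\oplus_m[\pm 1]$, which is pinned down by its rank $m$ and signature $\pm m$ (and is automatically odd), so equal rank and signature again yield $Q_X \cong Q_Y$.

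The main obstacle is not any computation but handling the definite case honestly: one cannot simply invoke Serre there, and must instead use Donaldson together with the orientation-reversal observation to eliminate every exotic definite form. The remaining work is bookkeeping---verifying that the definite/indefinite dichotomy is visible from the listed invariants, and checking that Freedman's ``at most one admits a PL structure'' clause collapses the odd-form ambiguity once we pass to PL manifolds.
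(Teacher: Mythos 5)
Your proposal is correct and follows exactly the route the paper intends: the paper offers no written proof beyond the remark that the theorem follows from Freedman, Donaldson, and Serre's classification of indefinite forms, and your argument is precisely the standard assembly of those three ingredients (Serre for the indefinite case, Donaldson plus orientation reversal for the definite case, Freedman with the odd-form/PL clause to pass from isomorphic forms to a homeomorphism). No gaps; the only cosmetic point is the usual implicit convention about orientations when comparing signatures, which the paper's statement also glosses over.
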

    
\begin{housekeeping}
All manifolds are assumed to be PL/smooth, closed, connected, and orientable, unless explicitly stated otherwise.
\end{housekeeping}

\begin{example}
    The $4$-sphere $\mathbb{S}^4$ has no 2-homology and so $Q_{\mathbb{S}^4}=\varnothing$. The complex projective plane, $\cpp$, has $Q_{\cpp}=[+1]$, and the oppositely oriented manifold $\overline{\cpp}$ has $Q_{\overline{\cpp}}=[-1]$. $S^2\times S^2$ has intersection form $Q_{S^2\times S^2}=\begin{bmatrix} 0 & 1 \\ 1 & 0 \end{bmatrix}$. 
\end{example}

\subsection{Handle Decompositions and Kirby Diagrams}\label{sec:prelims-handles}
In the smooth setting, we primarily work with $4$-manifolds via \emph{handle decompositions}. Let $X,X'$ be two smooth $4$-manifolds. We say $X$ is obtained from $X'$ by attaching a ($4$-dimensional) $k$-handle, denoted $X=X'\cup_{\varphi} h^k$, if there is an embedding 
\[\varphi:S^{k-1}\times D^{4-k}\to\partial X'\]
such that $X$ is of the form 
\[X=\left[X'\sqcup D^k\times D^{4-k} \right]/\varphi(x)\sim x,\] where $D^k$ denotes the closed $k$-disk ($k\in\{1,2,3,4\}$). There always exists a Morse function $f\colon X\to\mathbb{R}$ inducing a handle decomposition $X=\bigcup_{i=0}^n X_i$ with 
\[\varnothing=X_{-1}\subset X_0\subset\cdots\subset X_n=X\] 
in which $X_i$ is obtained from $X_{i-1}$ by attaching $i$-handles \cite{milnorMorseTheory}. Since $X$ is closed and connected, it can be assumed that there is a single $0$- and $4$-handle. Hence a closed $4$-manifold $X$ is obtained from $B^4$ by attaching $1$-, $2$-, and $3$-handles and finally capping off with another $B^4$. Given such a handle decomposition, we depict $X$ by drawing the attaching regions of the handles in the boundary of the $0$-handle~($\partial B^4\cong \mathbb{S}^3\cong\mathbb{R}^3\cup\{*\}$). By a result of Laudenbach and Poenaru \cite{LaudenbachPoenaru}, $3$- and $4$-handles attach uniquely up to diffeomorphism, and so it suffices to understand how the $1$- and $2$-handles attach. We depict $1$-handles by `dotted' unknots (see Section 5.4 of \cite{GompfStipsicz} for details of this notation). A $2$-handle is attached via a map of the form $S^1\times D^2\to S^3$. These maps are determined up to isotopy by (i) an embedding $S^1\times\{0\}\to S^3$ (i.e.\ a \emph{knot}) and (ii) a choice of normal vector field on the knot. Classes of such vector fields are in non-canonical bijection with the integers \cite{GompfStipsicz}. Once a choice for $0\in\mathbb{Z}$ has been made---the so-called \emph{$0$-framing} $f_0$---any other framing differs from $f_0$ by some integral number of twists. By convention, we make the choice that the normal vector field induced from the collar of any Seifert surface $S$ of $K$ is the zero framing. Fixing an orientation of $K$ gives a well-defined notion of linking number, and if we consider a parallel push-off $K'$ of $K$ along the surface $S$ then $\lk(K,K')=0$. This is referred to as the \emph{canonical framing} (or \emph{Seifert framing}).

As such, we draw a $2$-handle as a knot decorated with an integer. A decorated link diagram of this form---dotted unknots and integer decorated links, together with a specification of how many $3$- and $4$-handles there are---is called a \emph{Kirby diagram} and gives a combinatorial encoding of a closed $4$-manifold up to diffeomorphism. Figure \ref{fig:rationalBallRn} is an example of a typical Kirby diagram.

\section{Topological Classification}
\label{sec:top-class}

In this section we describe a classification of the triangulations in the census up to topological homeomorphism. This serves as a spring board to then carry out the PL classification. We start by grouping the triangulations of the census by their homology groups. Since all manifolds under consideration are closed and orientable, we omit $H_0(X;\bz)\cong H_4(X;\bz) \cong \bz$ from the homology vector, i.e.\ we simply refer to $(H_1(X;\bz),H_2(X;\bz),H_3(X;\bz))$. In a second step, and in the case of simply connected triangulations, we use  \emph{Regina}'s built-in intersection form routine and \Cref{thm:simplyConnectedClassification} to split these groups further.

For the remaining triangulations, we simply guess their topological types, build triangulations of these manifolds (using \emph{Katie}, see \Cref{sec:katie+plRes}, and built-in functionality of \emph{Regina}), and compare them to the census manifolds using Pachner moves (that is, we establish a PL homeomorphism). Altogether, this leads to the following classification, summarised in Tables \ref{tab:2p-TOPclassTable}, \ref{tab:4p-TOPclassTable}, and \ref{tab:6p-TOPclassTable} (note that the content of the fourth column refers to work done in \Cref{sec:pl}). In Table \ref{tab:4p-TOPclassTable}, we begin to see the appearance of $S^1$-bundles over \emph{lens spaces}, denoted $L(p,q)$, which are important class of $3$-manifold obtained by gluing two solid tori together (see \cite{saveliev2012lectures} for more).

\begin{table}[ht]
        \caption{Topological classification of the closed orientable 2-pentachoron census.}
    \label{tab:2p-TOPclassTable}
    \begin{tabular}{clrr}
    	\toprule
        \# Pentachora & $4$-Manifold & \# Triangulations & \# PL Classes\\
	\midrule
        2   & $\mathbb{S}^4$ & 6 & 1\\
            & $S^3\times S^1$ & 2 & 1\\
            \bottomrule
    \end{tabular}
\end{table}

\begin{table}[ht]
        \caption{Topological classification of the closed orientable 4-pentachoron census.}
    \label{tab:4p-TOPclassTable}
    \begin{tabular}{clrr}
    	\toprule
        \# Pentachora & $4$-Manifold & \# Triangulations & \# PL Classes \\
\midrule
        4   & $\mathbb{S}^4$ & 647 & $\leq 2$\\
            & $S^3\times S^1$ & 126 & 1\\
            & $\mathbb{C}P^2$ & 4 & 1\\
            & $S^3\times S^1\#\mathbb{C}P^2$ & 3 & 1\\
            & $\mathbb{R}P^3\times S^1$ & 1 & 1\\
            & $L(3,1)\times S^1$ & 1 & 1\\
            & $L(3,1)\twprod S^1$ & 2 & 1\\
            \bottomrule
    \end{tabular}
\end{table}

\paragraph*{Rational Homology 4-Spheres with Finite Fundamental Group}

Before giving the table for six pentachora, we first define a particular family of $4$-manifolds. Let $R(n)$ denote the $4$-manifold given by the Kirby diagram in Figure \ref{fig:rationalBallRn}, in which there are $n$ strands wrapping around the $1$-handle. 

\begin{figure}[ht]
    \centering
    \includegraphics[width=0.35\linewidth]{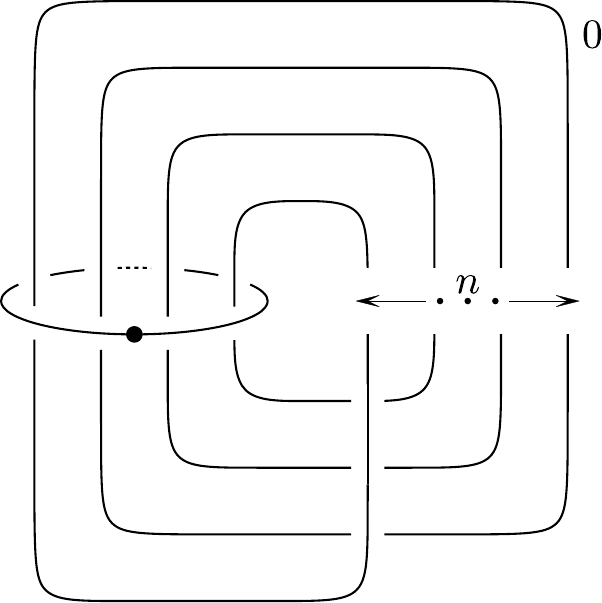}
    \caption{A Kirby diagram of the rational ball $R(n)$}
    \label{fig:rationalBallRn}
\end{figure}

This $4$-manifold $R(n)$ is a \emph{rational homology ball}---that is, it has the same homology as a $4$-sphere when one computes the homology with rational coefficients. Let $QS^4(n)$ denote the double of $R(n)$, that is 
\[QS^4(n):=DR(n)=R(n)\cup_{\mathrm{id}}R(n).\] 
The $4$-manifold $QS^4(n)$ is then a \emph{rational homology $4$-sphere} with $\pi_1(QS^4(n))\cong\bz_n$ and homology vector $(\bz_n,\bz_n,0)$. By using a combination of \emph{Katie}, \textsc{Up-Side-Down-Simplify} (see \Cref{sec:pl}), and \emph{Regina}, we obtain representative triangulations of $QS^4(n)$ for $n\in\{2,3\}$, each with six pentachora respectively.

\begin{remark}
    We note that the topological type of $QS^4(n)$ was not identified in \cite{perez}.
\end{remark}

\begin{table}[ht]
    \caption{Topological classification of the closed orientable 6-pentachoron census.}
\label{tab:6p-TOPclassTable}
    \begin{tabular}{clrr}
    	\toprule
        \# Pentachora & $4$-Manifold & \# Triangulations & \# PL Classes\\
\midrule
        6   & $\mathbb{S}^4$ & 405\,188                  &   $\leq 4 $   \\
            & $S^3\times S^1$ & 29\,124                  &   1   \\
            & $\mathbb{C}P^2$ & 4\,423                   &   $\leq 3$  \\
            & $S^2\times S^2$ & 5                       &   1   \\
            & $S^2\twprod S^2$ & 7                      &   1   \\
            & $\#_2\mathbb{C}P^2$ & 8                   &   1   \\
            & $S^3\times S^1\#\mathbb{C}P^2$ & 1\,477    &   1   \\
            & $S^3\times S^1\#_2\mathbb{C}P^2$ & 6      &   1   \\
            & $\mathbb{R}P^3\times S^1$ & 42            &   1   \\
            & $L(3,1)\times S^1$ & 55                   &   1   \\
            & $L(3,1)\twprod S^1$ & 64                  &   1   \\
            & $L(4,1)\times S^1$ & 3                    &   1   \\
            & $L(4,1)\twprod S^1$ & 3                   &   1   \\
            & $L(5,2)\times S^1$ & 1                    &   1   \\
            & $L(5,2)\twprod S^1$ & 1                   &   1   \\
            & $QS^4(2)$ & 84                            &   $\leq 2$\\
            & $QS^4(3)$ & 4                             &   1\\
\bottomrule
    \end{tabular}
\end{table} 

\section{The PL classification algorithm}
\label{sec:pl}

In this section we present our search heuristic to establish piecewise linear homeomorphisms between large quantities of triangulated manifolds conjectured to be in the same PL class. We first start by going over some subroutines before sketching the algorithm as a whole.

\subsection{Important subroutines}

\textsc{Up-Side-Down-Simplify} (USDS): This subroutine is the heart of our algorithm. It is relatively easy to describe, but details are very important. We point out that it is difficult to design a method which is efficient for both triangulations that can trivially be merged with a different class, as well as pathological cases needing millions, if not billions of moves to escape a local area of the Pachner graph (apart from the triangulation presented in \Cref{sec:pathological} resisting classification altogether, there are four more $4$-pentachoron triangulations requiring a large number of moves before being classified as standard). Extensive research has been done to find effective strategies to compare PL homeomorphism types for triangulated $4$-manifolds, see \Cref{sec:intro} for a detailed discussion. At least in the case of the census of small $4$-manifold triangulations, the below strategy yields the best results on a consistent basis.

Our heuristic is a slight variation of a standard biased Markov chain-style random walk through the Pachner graph of a triangulation. We bias the sizes of the triangulations visited by the random walk around a fixed target size of $\hat{n}$ pentachora (in our calculations, we used $8 \leq \hat{n} \leq 12$). If the current state of the random walk has more than $\hat{n}$ pentachora, an exponential penalty is imposed on choosing a Pachner move further increasing the size of the state. The same is true for a current state of smaller size than $\hat{n}$ and moves further reducing its size.

The core idea of our approach lies in the type of moves we choose to increase and decrease the sizes of our triangulations: we use $2$-$4$-Pachner moves to increase the size of a triangulation, but $2$-$0$- edge- and triangle-moves to reduce the size of a triangulation. Moreover, we use standard $3$-$3$-Pachner moves to change triangulations while keeping their $f$-vector constant. 

Empirical evidence suggest, that this approach mixes triangulations much faster than by just using standard Pachner moves. This approach has already been used very successfully by the first author \cite{Burke-SoftwareJoCGver} in the search of minimal triangulations. The method used there is even simpler in structure than the one presented below.

Our base method has four input parameters that stay fixed for the duration the method is run: {\em (a)} A set probability $x \in [0,1]$ to decide whether a $3$-$3$-move or some other move is performed,  {\em (b)} a target size $\hat{n}$ for triangulations to be visited in the random walk,  {\em (c)} a parameter $\alpha \in \mathbb{R}$ determining the severity of the penalty for sampling a triangulation of size away from $\hat{n}$, and  {\em (d)} a number of steps $s \in \mathbb{Z}$.

\begin{tcolorbox}[colframe=blue!75!green!75,title=Heuristic 1]
\textbf{Setup:}
  \begin{itemize}
      \item $x \in [0,1]$, $\alpha \in \mathbb{R}$, $\hat{n} \in \mathbb{Z}$, $s \in \mathbb{Z}$
  \end{itemize}
\textbf{Main Loop:}
\begin{itemize}
  \item Set $\tri = \tri_0$ 
  \item For each step $1 \leq i \leq s$ and while $|\tri| \neq n$:
  \begin{enumerate}
      \item Update $\tri = \tri'$
      \item Update $\beta = \frac{e^{\alpha (\hat{n}-|\tri|)}}{1+e^{\alpha (\hat{n}-|\tri|)}} \in [0,1]$
      \item Sample $u \in \mathcal{U}([0,1])$
      \item Case $u > x$: if available, do $3$-$3$-move on $\tri$ to return triangulation $\tri'$
      \item Sample $v\in \mathcal{U}([0,1])$
      \item If $v > \beta$: if available, perform $2$-$0$-Edge- or Triangle-move on $\tri$ to return triangulation~$\tri'$
      \item Perform $2$-$4$-move on $\tri$ to return triangulation $\tri'$
  \end{enumerate}
  \item If $\tri$ and $\tri_0$ are in different classes, merge classes
\end{itemize}
\end{tcolorbox}

\textsc{Adjust-Vertex-Number}: This subroutine takes a triangulated $4$-manifold $\tri_0$ as input, and outputs a PL homeomorphic triangulation with $v$ vertices.

If $\tri_0$ has fewer than $v$ vertices, perform $1$-$5$-moves $v-f_0(\tri_0)$ times to obtain a $v$-vertex triangulation. If, on the other hand, the input triangulation has more than $v$ vertices, we randomly perform edge collapses. If not enough edge collapses are available, we run USDS in between until they become available. It is worthwhile noting that the latter case has no guarantee to terminate in general, but does very quickly in practice.

\subsection{The Algorithm}

Our implementation is based on establishing PL-homeomorphisms using Pachner's theorem \cite{pachner1987}, i.e.\ by describing sequences of Pachner moves transforming one triangulation into another. Our computations are organised in a \textsc{Union-Find} structure: at any step of our calculations, every triangulation is associated to a class of triangulations for which pairwise PL homeomorphisms have already been established. Each class has a unique representative triangulation. If a sequence of Pachner moves is found turning a triangulation from one class into a triangulation from another class, both classes are merged with the representative of the latter class, and we continue. This way, if the task is to classify $N$ triangulations, we only need to establish $O(N)$, rather than $O(N^2)$ PL homeomorphisms.

We start with a large number of triangulations $\tri_1, \ldots , \tri_N$. Here, we assume that all triangulations have the same number of $n$ pentachora, but this is not necessary for the algorithm to work. We furthermore assume that we have already established that all triangulations $\tri_i$, $1 \leq i \leq N$, are homeomorphic. Again, this is not a necessary requirement for the algorithm to work, but we require, however, that all triangulations have already been tested to have the same Euler characteristic $m$.

For $n$-pentachora triangulations of Euler characteristic $m$, it is a consequence of \Cref{eq:ds1,eq:ds2,eq:ds3} that their $f$-vector is determined by their number of vertices. We have the following algorithm.

\begin{tcolorbox}[colframe=blue!75!green!75, title=Main Algorithm]
    \begin{enumerate}
    \item Initialise \textsc{Union-Find} structure with $N$ classes $\{\tri_i \}$, $ 1\leq i \leq N$, of size $1$.
    \item Fix $v = \min \{ f_0 (\tri_i) | 1 \leq i \leq N \}$, the smallest number of vertices found in our list.
    \item For all $1 \leq i \leq N$:
    \begin{enumerate}
        \item Use \textsc{Adjust-Vertex-Number} to produce a $v$-vertex triangulation $\tri'$ 
        \item Use USDS to connect $\tri'$ to $\tri_j$, $f_0(\tri_j)=v$, $f_4(\tri_j) = n$, from the list
    \end{enumerate}
    Once this step is complete, every PL class has a $v$-vertex representative.
    \item For all classes, run USDS to connect its representative to a new class. Use a moderate time-out threshold to skip hard cases.
    \item As long as there are still more than $K$ classes, goto {\bf 6.}
    \item For remaining classes, run USDS with relaxed parameters.
    \end{enumerate}
\end{tcolorbox}

\subsection{Implementation, Timings, and Remarks}
\label{ssec:implementation}

Our implementation of the algorithm described in this section is available at\newline\url{https://github.com/raburke/Dim4Census/}.

Our guiding principle for the implementation is to minimise human intervention. We point out that we can modify arbitrarily many triangulations in parallel until they can be merged with an existing class of the \textsc{Union-Find} structure. This parallelises the bottleneck of the computation for large censuses of largely easy-to-handle triangulations. When dealing with pathologically difficult combinatorial structures, running different random walks on the same triangulation may lead to similar speed-ups. However, for the $6$-pentachoron census with its $\approx 400,000$ triangulations, parallelisation is not crucial and we hence defer its implementation to future work.

For some indications of running times, we ran our algorithm on a laptop with an 11th Gen Intel i7 processor and 32GB of RAM, with input the $405\,188$ $6$-pentachoron triangulations homeomorphic to $S^4$. We obtained the following timings, summarised in Table \ref{tab:sampleRunningTimesMain}, for first running \textsc{Adjust-Vertex-Number} (referred to as {\em Step 1}), then running USDS until only 10 classes are left over ({\em Step 2}), and then the times to merge any additional class until we reduce to 6 classes, which is when we stopped our calculations.


\begin{table}[h]
	\caption{Sample running times of the Main Algorithm with input $405\,188$ $6$-pentachoron $4$-spheres.}
	\label{tab:sampleRunningTimesMain}
\begin{tabular}{rr|rrrr|r}
\toprule
Step 1 (s) & Step 2 (s) & 9 cl (s) & 8 cl (s) & 7 cl (s) & 6 cl (s) & Total time (s) \\
\midrule
$13\,686$&$10\,752$&$88$&$4\,787$&$11\,008$&$4\,987$&$45\,308$ \\
$14\,061$&$10\,306$&$373$&$1\,097$&$335$&$1\,143$&$27\,315$ \\
$14\,170$&$9\,264$&$365$&$451$&$464$&$3\,754$&$28\,468$\\
$14\,182$&$9\,359$&$1\,116$&$1\,772$&$99$&$3\,772$&$30\,300$\\
$14\,142$&$9\,791$&$405$&$811$&$1\,598$&$7\,961$&$34\,708$ \\
$13\,212$&$9\,342$&$302$&$48$&$3\,100$&$3\,155$&$29\,159$\\
\bottomrule
\end{tabular}
\end{table}

Achieving 5 connected classes usually does not take much longer (with $1\, 611$ and $18\, 929$ additional seconds in two of the six runs summarised above). Achieving four class takes around a week, as we were able to observe on multiple occasions. The algorithm never achieved 3 classes. Running the complete algorithm in parallel on $100+$ cores may produce additional results. Alternatively, exhaustive enumeration on a larger machine with more memory may lead to additional merges of classes.

PL classification of other topological types was achieved through a combination of exhaustive enumeration and our main algorithm. Relevant timings are summarised in Table \ref{tab:sampleRunningTimesPLClass}.


\begin{table}[h]
\caption{Sample running times of PL classification for selected manifolds.}
\label{tab:sampleRunningTimesPLClass}
\begin{tabular}{l|rrr|r}
\toprule
Manifold & Step 1 (s)& Step 2 (s)& 1 cl (s) & Total time (s) \\
\midrule
$S^3\times S^1$&$2480$&$119$&$51$&$2650$\\
$S^2\twprod S^2$&$115$&$0$&$4\,514$&$4\,629$\\
$\#_2\mathbb{C}P^2$&$27$&$0$&$1\,531$&$1\,558$\\
$S^3\times S^1\#\mathbb{C}P^2$&$1\,060$&$140$&$42$&$1\,242$\\
$S^3\times S^1\#_2\mathbb{C}P^2$&$0$&$0$&$62\,022$&$62\,022$\\
\bottomrule
\end{tabular}
\end{table}

\subsection{{\em Katie} and PL Classification Results}\label{sec:katie+plRes}

As discussed in \Cref{sec:pl}, our PL classification algorithm determines whether two given triangulations are PL homeomorphic. What is missing from this algorithm is a reference triangulation for which its PL homeomorphism type is known.

For this we use the software tool \emph{Katie} \cite{Katie,Burke-SoftwareJoCGver},  developed by the first author. \emph{Katie} is based on an algorithm due to Casali and Cristofori \cite{casaliCristofori2023Final}. It takes as input a Kirby diagram and produces a triangulation of the associated PL $4$-manifold. For each of the topological types in \Cref{tab:2p-TOPclassTable,tab:4p-TOPclassTable,tab:6p-TOPclassTable}, we start with the Kirby diagram representing its canonical PL-type, and compare the resulting triangulation to the triangulations in the census. 

For all but three topological types, all triangulations homeomorphic to a given $4$-manifold, are pairwise PL-homeomorphic. The three exceptions are $\mathbb{S}^4$, $\cpp$, and the $\mathbb{Q}$-homology sphere $QS^4(2)$, cf. \Cref{tab:6p-TOPclassTable} for which we find $4$, $3$, and $2$ classes respectively.

As already documented in \Cref{ssec:implementation}, our algorithm reliably takes all $405\,188$ triangulations homeomorphic to $\mathbb{S}^4$ from the $6$-pentachoron census and classifies them into around $5$ PL classes within a day of computation time. Some of the remaining classes have remarkable combinatorial properties, which are discussed in more detail in \Cref{sec:pathological}. These remaining classes are certified to be impossible to connect to other classes using an exhaustive enumeration and traversal of the Pachner graph with an excess height of $6$ (that is, only triangulations with number of pentachora up to $6+6=12$ are considered). 

In some contrast to the triangulations homeomorphic to $\mathbb{S}^4$, we can connect all but four of the $29\, 124$ triangulations homeomorphic to $S^3\times S^1$ using exhaustive simplification and traversal of the Pachner graph, with an excess height of at most four. The remaining four can be confirmed to be standard $S^3\times S^1$s by (i) retriangulating to show they were all PL-homeomorphic to each other, and then (ii) traversing the Pachner graph with an excess height of at most $6$. Alternatively, using our algorithm, all $29\, 124$ triangulations homeomorphic to $S^3\times S^1$ can be connected within an hour of computation time, see \Cref{ssec:implementation}.

This difference in behaviour is noteworthy since $\mathbb{S}^4$ and $S^3\times S^1$ are the only two closed orientable $4$-manifolds which can be triangulated using only $2$ pentachora.

\section{Pathological Triangulations, Combinatorial Obstructions, and 2-Knots}
\label{sec:pathological}

In this section we will present some preliminary analysis of, and discuss ideas concerning, the triangulations still resisting classification. We focus our attention for now on the unique $4$-pentachoron $4$-sphere which we were unable to connect to the standard $4$-sphere. The sphere in question, which we will denote by $Q$, has isomorphism signature \is{eAMPcaabcddd+aoa+aAa8aQara}. The dual graph of $Q$ is depicted in Figure~\ref{fig:Qara-DualGraph}. The colouring of the graph will be explained as required throughout the section.

\begin{figure}[h]
	\centering
	\includegraphics[width=0.5\textwidth]{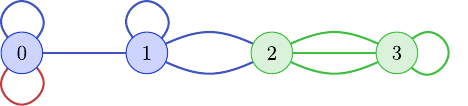}
		\caption{The dual graph of $Q$}
	\label{fig:Qara-DualGraph}
\end{figure}

The $2$-pentachoron subcomplex consisting of pentachora $2$ and $3$ of $Q$ also appears within many of the other triangulations resisting classification (for example, the $6$-pentachoron $\mathbb{C}P^2$s). The subcomplex in question, denoted $C$, has isomorphism signature \is{cHIbbb0bRbpb}, $f$-vector $f=(1,1,5,6,2)$, and boundary an ideal $2$-tetrahedron solid torus. 

Considered in isolation $C$ is not a triangulation of a manifold: its single vertex link is a pinched genus-$4$ handlebody, and the link of its single edge is a thrice-punctured sphere. However, the boundary of $C$ can be uniquely closed up to produce an ideal triangulation of a $4$-manifold. Performing the gluing $\Delta_0(0234)\leftrightarrow\Delta_0(0312)$ yields  the unique\footnote{Verifiable via a very short exhaustive search of the ideal $2$-pentachoron census.} $2$-pentachoron ideal triangulation of a \emph{Cappell--Shaneson $2$-knot} complement, first analysed by Budney--Burton--Hillman \cite{budneyBurtonHillman-CappellShanesonComp}, and which we will denote by $CS$. This is an ideal triangulation of the complement of a \emph{knotted $2$-sphere} $\Sigma$ in the $4$-sphere, $S^4-\nu \Sigma$ (where $\nu\Sigma$ denotes a tubular neighbourhood of $\Sigma$). By knotted, we mean that $\Sigma$ does not bound an embedded $3$-ball in $S^4$. The Cappell--Shaneson $2$-knot triangulation considered in \cite{budneyBurtonHillman-CappellShanesonComp} is just one in a family of knotted $2$-spheres in $S^4$, which are in a sense parametrised by elements of $GL(\mathbb{Z}^3)$ and their traces \cite{CappellShaneson}. The $2$-knot of \cite{budneyBurtonHillman-CappellShanesonComp} corresponds to one such $A\in GL(\mathbb{Z}^3)$ with trace $0$.
 
In addition to the study of $2$-knots being an interesting pursuit in its own right, $2$-knots have also appeared as an important source of examples for constructing potential counterexamples to S4PC via the \emph{Gluck construction}, which goes as follows. Given a knotted $2$-sphere $\Sigma$ in $S^4$, remove a tubular neighbourhood of $\Sigma$ (i.e.\ a copy of $\Sigma\times D^2$) and reglue it by the self-diffeomorphism of $S^2\times S^1$ which rotates the $S^2$ factor once as one travels around $S^1$. The result is a topological $4$-sphere but which in general is not known to be diffeomorphic to the standard $4$-sphere. However, it has been shown that for many classes of $2$-knots, the result is indeed a standard $S^4$: for example, twist-spun knots (discussed further in \Cref{sec:future-2knots}) \cite{Gordon-KnotsInS4,Pao-TwistingSpunKnots},
doubles of ribbon disks (Exercise 6.2.11(b)~\cite{GompfStipsicz}), and various Cappell--Shaneson $2$-knots \cite{akbulut-csSpheresAreStandard,Gompf-KillingAkbulutKirbySphere,Gompf-csSpheresAreStandard,Iwaki-CSspheresAreStandard}.

The `cut-open' Cappell--Shaneson complex $C$ embeds in a topological $4$-ball $B^4_C$, which is itself obtained by ungluing a single facet identification of $Q$ (depicted by the red arc in Figure \ref{fig:Qara-DualGraph}). The isomorphism signature of $B^4_C$ is~\texttt{eGzMkabcdddcaGa8aAa0awa}.
Perhaps the first point of interest with regards to $B^4_C$ is that even the presentation of its fundamental group is challenging to simplify, with a presentation given by 
\[
    \langle a, b \,\mid\, a^3 b^3 a^{-2} b^{-2}, a^{-3} b^{-1} a^5 b^2 \rangle.
\]
Using GAP, we are able to verify that the order of the group is 1, and hence is indeed the trivial group. We note that GAP uses coset enumeration to compute the order in this case, rather than performing any kind of simplification heuristic, for example via a sequence of Tietze transformations. 

Moreover, it is worthwhile to note that when $B^4_C$ is attached to a larger triangulation, the $B^4_C$ subcomplex remains intact through very large flip sequences. For example, attaching $B^4_C$ to a minimal triangulation of $S^2\times S^2-\mathring{D^4}$ produces a new triangulation of a topological $S^2\times S^2$ which we are unable to (re)simplify to the original triangulation of $S^2\times S^2$. Similarly for many other choices of $4$-manifold.

In this sense, $C$, or $B^4_C$, is a potential combinatorial obstruction to simplifying triangulations it appears within (similar to the topological `dunce hat' being a combinatorial obstruction to collapsing a contractible space). The fact that all five triangles of $C$ are in fact dunce hats perhaps gives some, albeit naive, evidence in support of this idea. In other words, one might wonder if $C$ constitutes a kind of higher-dimensional analogue of the dunce hat.

Whilst the $4$-sphere $Q$ does not appear to be directly obtained from the Cappell--Shaneson $2$-knot itself via the Gluck construction---since truncating the ideal vertex of $CS$ and attaching a $S^2\times D^2$ to the resulting real boundary yields triangulations of the $4$-sphere which we are able to successfully simplify to a minimal triangulation of the standard $4$-sphere---it nevertheless still seems tempting to assume that $Q$ relates in some way to the Cappell--Shaneson $4$-spheres, in light of the complex $C$ and its relation to $CS$.

We have the following results and conjectures concerning $B^4_C$ and $Q$.

\begin{conjecture}
The topological $4$-ball $B^4_C$ is PL-homeomorphic to the standard $4$-ball.
\end{conjecture}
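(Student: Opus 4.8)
The plan is to reduce the conjecture to the standardness of an associated homotopy $4$-sphere, and then to identify that sphere as one already known to be standard. First I would make precise the sense in which $B^4_C$ is a homotopy $4$-ball. Its boundary $\partial B^4_C$ is the closed $3$-manifold built from the two tetrahedra exposed by the ungluing; since $B^4_C$ is a topological $4$-ball this boundary is $\mathbb{S}^3$, and being $3$-dimensional the homeomorphism is automatically PL (Moise) and algorithmically certifiable. Meanwhile $B^4_C$ has the homology of a ball, and -- as recorded above -- its fundamental group presentation trivialises in GAP \cite{gap}, so $B^4_C$ is a compact contractible PL $4$-manifold with $\partial B^4_C \cong_{\mathrm{PL}} \mathbb{S}^3$. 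Capping off along the boundary with a standard PL $4$-ball, for instance by coning the two boundary tetrahedra to a new apex (adding two pentachora, so that $\hat W$ has six pentachora), produces by van Kampen and Mayer--Vietoris a closed, simply connected PL $4$-manifold $\hat W$ with the homology of $\mathbb{S}^4$, i.e.\ a homotopy $4$-sphere. Recalling that PL and smooth structures agree in dimension $4$, the key reduction is: if $\hat W \cong_{\mathrm{PL}} \mathbb{S}^4$, then $B^4_C \cong_{\mathrm{PL}} B^4$. This implication is not circular and sidesteps the $4$-dimensional Schoenflies problem, because the capping ball provides an honestly embedded disc $D^4 \hookrightarrow \hat W$: any two smooth embeddings of $D^4$ into a connected $4$-manifold are ambient isotopic (Palais--Cerf), so the closed complement $B^4_C = \hat W \setminus \mathring{D^4}$ is diffeomorphic to the complement of a standard ball in $\mathbb{S}^4$, namely $B^4$.

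It remains to prove $\hat W \cong_{\mathrm{PL}} \mathbb{S}^4$, which in general is an instance of the smooth $4$-dimensional Poincar\'e conjecture and so is out of reach by a direct argument; instead I would exploit the Cappell--Shaneson structure already visible in $C$. Since $C$ closes to the unique $2$-pentachoron Cappell--Shaneson $2$-knot complement \cite{budneyBurtonHillman-CappellShanesonComp}, the closed manifold $S^4_C$ into which this knotted $2$-sphere embeds should be a Cappell--Shaneson homotopy $4$-sphere \cite{cappellShaneson-spheres}, and $\hat W$ should be PL-homeomorphic to it. The concrete task is to reconstruct, from the combinatorics of $C$ (equivalently of $S^4_C$), an explicit handle description: read off the monodromy matrix in $\mathrm{SL}(3,\bz)$ of the punctured $T^3$-mapping-torus presentation, or equivalently produce a Kirby diagram in Cappell--Shaneson form with the appropriate framing. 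Once $S^4_C$ is matched with a member of the Cappell--Shaneson family, the theorems of Akbulut \cite{akbulut-csSpheresAreStandard} and Gompf \cite{Gompf-csSpheresAreStandard} show it is diffeomorphic to the standard $\mathbb{S}^4$; combined with the reduction above, this yields $B^4_C \cong_{\mathrm{PL}} B^4$.

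The main obstacle is precisely this identification step. Translating a generalised triangulation -- in which two facets of a single pentachoron may be identified -- into clean Kirby-calculus or monodromy data is delicate, and the Cappell--Shaneson spheres form a family parameterised by conjugacy classes of matrices $A \in \mathrm{SL}(3,\bz)$ with $\det(A-I)=\pm 1$, of which Akbulut and Gompf resolved large but not exhaustive ranges. Certifying both that $S^4_C$ is genuinely of Cappell--Shaneson type and that its parameter lies in the resolved range is where the real work lies. A purely computational alternative -- attempting to simplify $\hat W$ (or $S^4_C$) to the standard $\mathbb{S}^4$ by a Pachner sequence using the heuristics of \Cref{sec:pl} -- is unlikely to succeed on its own, since, as observed above, the $C$ (respectively $B^4_C$) subcomplex survives intact through very long flip sequences and acts as a persistent simplification obstruction; such a search would at best corroborate the conjecture rather than settle it.
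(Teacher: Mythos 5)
This statement is posed in the paper as an open conjecture; the paper offers no proof of it, so your proposal cannot be measured against one. Judged on its own terms, your first reduction is sound and is the right way to frame the problem: capping $B^4_C$ off along its two-tetrahedron boundary $\mathbb{S}^3$ (equivalently, working with $S^4_C$ itself, since gluing a $4$-ball along a boundary $3$-sphere is unique up to PL homeomorphism) produces a homotopy $4$-sphere, and uniqueness of embedded discs up to ambient isotopy then shows that $B^4_C$ is standard if and only if that homotopy sphere is the standard $\mathbb{S}^4$. This correctly avoids the Schoenflies issue and matches the paper's implicit framing, in which $B^4_C$ and $S^4_C$ are treated as two faces of the same difficulty.

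The genuine gap is the second half. You assert that $S^4_C$ ``should be'' a Cappell--Shaneson homotopy sphere and that it can then be dispatched by Akbulut and Gompf, but the paper itself only says it ``seems natural to assume that $S^4_C$ relates -- in some way -- to the Cappell--Shaneson spheres'': the only established fact is that the subcomplex $C$ closes up to a Cappell--Shaneson \emph{$2$-knot complement}, which does not by itself imply that the ambient closed manifold $S^4_C$ is a Cappell--Shaneson sphere, nor pin down a monodromy matrix or framing. No procedure is given for extracting such data from the generalised triangulation, and even granting a Cappell--Shaneson identification, the Akbulut--Gompf results do not cover the entire family, so one would additionally have to certify that the parameter lands in the resolved range. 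As you acknowledge, these are precisely the unproven steps; what you have written is a plausible research programme that reduces the conjecture to another open identification problem, not a proof. The only unconditional evidence available is the paper's Proposition that any Pachner path from $S^4_C$ to a standard sphere must pass through a triangulation with at least $12$ pentachora, which quantifies the difficulty rather than resolving it.
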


We conjecture that $B^4_C$ supports the standard PL structure because of its small size and previously discussed connection to Cappell--Shaneson sphere which is known to be standard.

\begin{proposition}
    If there exists a sequence of Pachner moves connecting $Q$ to a triangulation of the standard PL $4$-sphere, then this sequence must contain at least one triangulation with at least $12$ pentachora.
\end{proposition}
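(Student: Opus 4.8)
The statement concerns only the local structure of the Pachner graph around $S^4_C$, and no combinatorial invariant is known that would separate bounded-height Pachner components, so the only realistic approach is an exhaustive height-bounded search. The plan is to prove the contrapositive: there is no sequence of Pachner moves joining $S^4_C$ to a certified-standard triangulation of $\mathbb{S}^4$ in which every triangulation has at most $11$ pentachora. Observe first that every triangulation occurring in such a sequence is again a closed $4$-manifold triangulation, so by \eqref{eq:ds1} its number of pentachora is even; thus ``at most $11$'' coincides with ``at most $10$'', and the only way for a sequence to escape this range is to reach $12$ pentachora.

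First I would build the height-bounded component of $S^4_C$. Starting from the given $4$-pentachoron triangulation, I would run a breadth-first traversal of the Pachner graph in \emph{Regina}, at each node applying all available $1$-$5$, $2$-$4$, $3$-$3$, $4$-$2$ and $5$-$1$ moves but discarding any result with more than $10$ pentachora. Every triangulation produced is replaced by its isomorphism signature and inserted into a hash set, so that each isomorphism class is expanded at most once. Since there are only finitely many isomorphism classes with at most $10$ pentachora, the traversal terminates and returns the complete bounded component $\mathcal{R}$ containing $S^4_C$.

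Next I would compare $\mathcal{R}$ against the standard sphere. It suffices to check that $\mathcal{R}$ contains none of the triangulations already certified to be standard $\mathbb{S}^4$ -- the minimal $2$-pentachoron sphere and the reference triangulations produced by \emph{Katie} -- or, equivalently, that $\mathcal{R}$ is disjoint from the height-bounded component of the standard-class representative. Given this disjointness, no move sequence that stays at $\leq 10$ pentachora can carry $S^4_C$ to a certified-standard triangulation, so any connecting sequence must contain a triangulation with at least $12$ pentachora, as claimed.

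The main obstacle is computational rather than conceptual: the number of even-sized triangulations with up to $10$ pentachora is very large, so the correctness of the certificate rests entirely on the traversal being genuinely exhaustive -- a complete move set at every node and reliable signature-based deduplication -- while not exhausting memory. A secondary, logical point worth emphasising is that the comparison in the third step deliberately avoids deciding the PL-standardness of each individual member of $\mathcal{R}$ (precisely the question the paper leaves open); it needs only the weaker, decidable fact that $\mathcal{R}$ misses the known standard component, which the enumeration supplies directly.
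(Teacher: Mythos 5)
Your proposal takes essentially the same approach as the paper: the paper's proof is exactly an exhaustive enumeration of all local move sequences starting from $S^4_C$ and visiting only triangulations with at most $10$ pentachora (excess height $6$), observing that this bounded component never reaches another $4$-pentachoron triangulation of the $4$-sphere, with the step from $11$ up to $12$ implicit in the evenness constraint of Equation \eqref{eq:ds1}. The only cosmetic difference is that the paper phrases the final disjointness check as ``does not connect $S^4_C$ to any other $4$-pentachoron triangulation of the $4$-sphere'' rather than as a comparison against the certified-standard reference triangulations.
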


\begin{proof}
  Running an exhaustive enumeration of all local move sequences starting from $Q$ and visiting triangulations up to $10$ pentachora (i.e.\ an excess height of $6$) does not connect $Q$ to any other $4$-pentachoron triangulation of the $4$-sphere.
\end{proof}

\section{Ongoing and Future Work}
To conclude this article, we detail how to extend the observations made in the previous section and, more broadly, in this article, to two research directions for future work.
    
\subsection{The 8-Pentachoron Census}\label{sec:future-eightP}
Generating a census (of any type) with $n$ simplicies consists of two stages: First, enumerate all possible dual graphs with $n$ nodes; then, for each graph, test all possible $5n/2$ facet gluings and retain those that yield valid triangulations.
For 2, 4, and 6 pentachora there are $3$, $26$, and $638$ such graphs, 
leading to $8$, $784$, and $440\,495$ valid 4-manifold triangulations, respectively.

We can deduce from this that only a very small fraction of possible gluings give rise to a triangulation of a 4-manifold, implying that various optimisations are possible (and needed) to build the census even only up to $6$ pentachora (cf.\ \cite{burton-efficientEnumeration,burtonNonOrientableCensusUnionFind,burtonPettersson-enumerating3} for an overview of such optimisations in the $3$-dimensional setting). In the case of 8 pentachora however, existing optimisations appear to no longer be sufficient. Consequently, entirely new algorithms are in development to complete this next step of the $4$-dimensional census. A completed $8$-pentachoron census will come with new challenges for classifying their PL-types. Undoubtedly, more interesting pathological triangulations will emerge and in greater numbers. This is work in progress.

\subsection{More 2-Knot Complements -- Generation and Classification}\label{sec:future-2knots}
The Cappell--Shaneson $2$-knot complement discussed at the beginning of this section is only one of many such topological objects that can be described using only a small number of pentachora. In theory, each of them can lead to difficult triangulations of $4$-balls and $4$-spheres, related to $B^4_C$ and $S^4_C$. Such examples are very useful for constructions in low-dimensional topology, or to benchmark future iterations of search methods. Given such a collection of examples, the major challenge is to either rigorously quantify how difficult they are to simplify, or to relate them to former or present potential counterexamples to S4PC. 

It would therefore seem worthwhile to attempt to enumerate (and ideally, also classify) triangulations of $2$-knot complements. This would provide us with a wealth of examples with which to construct interesting $4$-sphere triangulations.  

Whilst we currently do not have a means of constructing triangulations of arbitrary $2$-knot complements, we do have several sources of examples and construction techniques for certain classes of $2$-knots. One of the first techniques for constructing non-trivial $2$-knots was the \emph{spinning construction} due to Artin \cite{ArtinSpin}, which can be described as follows. Let $K:S^1\to\mathbb{R}^3$ be a classical $1$-knot. One can always isotope $K$ such that it lies in the upper half space $\mathbb{R}^3_+=\{(x,y,z):z\geq 0\}$ except for an unknotted arc which lies below the $xy$-plane. Remove the interior of this unknotted arc to obtain a knotted arc in $\mathbb{R}^3_+$ with its endpoints in the $x$-$y$ plane. Now rotate $\mathbb{R}^3_+$ about $\mathbb{R}^2$ through $\mathbb{R}^4$ via the map which sends a point $(x,y,z)\in\mathbb{R}^3_+$ to $(x,y,z\cos\theta,z\sin\theta)\in\mathbb{R}^4$ ($0\leq\theta< 2\pi$). In this way, the knotted arc sweeps out a knotted $\mathbb{S}^2$ in $\mathbb{R}^4$ (moreover the $2$-knot obtained through this method is independent of the arc removed from the original $1$-knot).

The latest release of \emph{Regina} (version 7.4) now includes functionality to produce a triangulation of the complement of such a {\em spun knot}, given a $1$-knot as input. This provides us with as many triangulations of $2$-knot complements as there are $1$-knots.  

A generalisation of the spinning construction, due to Zeeman \cite{Zeeman-TwistingSpunKnots}, involves also rotating the knotted arc itself independently by a whole number of twists whilst the `sweep-out' through $\mathbb{R}^4$ takes place; this gives the so-called \emph{$k$-twist spun} of the knot $K$, denoted $\tau_k(K)$. As of the time of writing, we have only implemented an algorithm to construct `regular' (i.e.\ $0$-twist) spun knots in \emph{Regina}, and so it is a point of future work to extend this construction to the $k$-twist case.

The second source of examples is the ideal census with up to $6$-pentachora which has also already been generated (though not yet classified in its entirety). It is not difficult to identify potential candidates for $2$-knot complements within the census: a triangulation of a $2$-knot complement has the same homology as $\mathbb{S}^1$, and has boundary $S^2\times S^1$. If we wish to further filter out potential `unknots' (i.e.\ triangulations of $S^1\times B^3$) then we filter out triangulations with fundamental group isomorphic to $\mathbb{Z}$ and look for more interesting fundamental groups. From such a list of candidates, in order to conclude that a given triangulation is indeed the complement of an embedded $S^2$ in a $4$-sphere, one needs to verify that attaching a $S^2\times D^2$ to the boundary of a given candidate produces a $4$-sphere. 

Using this process, we were able to (i) completely classify the ideal census with $2$ pentachora (Table \ref{tab:ideal2pClassification}); (ii) find complements of non-trivial $2$-knots with $4$ pentachora; and (iii) find candidates for complements of non-trivial $2$-knots with $6$ pentachora. Note that in the $2$-pentachoron census presented in Table \ref{tab:ideal2pClassification}, since the triangulations are of manifolds with ideal boundary, the manifolds labelled $L(p,q)\times I$ can also be understood as double cones over $L(p,q)$. 
All triangulations within a given row were determined to be PL-homeomorphic and hence we do not list the number of PL classes.

\begin{table}[h]
	\caption{Classification of the ideal orientable $2$-pentachoron census.}
	\label{tab:ideal2pClassification}
	\begin{tabular}{lr}
		\toprule
		$4$-Manifold & \# Triangulations\\
		\midrule
 $S^1\times B^3$ & 3\\
 Cappell--Shaneson Trace $0$ & 1 \\
 $L(3,1)\times I$ &  1\\
 $L(4,1)\times I$ &  1\\
 $L(5,2)\times I$ & 1 \\
	\bottomrule
	\end{tabular}
\end{table}

Of the $3\,366$ triangulations within the ideal $4$-pentachoron census, we identified $1\,754$ triangulations of $S^1\times B^3$ and $54$ non-trivial $2$-knot complements. 
Note that in general, unlike $1$-knots, $2$-knots are not determined by their complement (nor fundamental group) \cite{CappellShaneson}; hence names in Table \ref{tab:4p2Knots} should be treated as an `alias' for convenience (and indeed for the final two rows of Table \ref{tab:4p2Knots} this reference label is largely meaningless as we have yet to determine a known label for these $2$-knots; the subscripts refer vaguely to the fundamental group). 

Issa \cite{Issa} has produced triangulations of Cappell--Shaneson complements up to trace $20$ (along with the associated $4$-spheres), which enabled us, by direct comparison, to identify the particular Cappell--Shaneson knots within the census. 
Similarly, again by direct construction and comparison, we were able to identify the $2$-twist spins of the trefoil and figure-eight knots, denoted by $\tau_2(3_1)$ and $\tau_2(4_1)$ respectively, in the table. Since \emph{Regina} does not currently support $k$-twist spun knots for $k>0$, the method of construction we employed was by exploiting the fact that the $2$-twist spin of a $2$-bridge knot $K(p,q)$ is fibred by the punctured lens space, which we denote by $L(p,q)^\circ$ \cite{Zeeman-TwistingSpunKnots,Teragaito-fibered2KnotsLens}. 

By first constructing $L(p,q)^\circ$ and then using \emph{Regina}'s \texttt{bundleWithMonodromy} function we obtained triangulations with the correct topology. After simplifying, we were then able to demonstrate PL-homeomorphisms with those triangulations which appeared in the census. 
It is known that fibred $2$-knots with fibre a punctured lens space are in fact determined by their complement~\cite{PlotnickSuciu-fiberPuncturedLensDetExt}. 

We make the following conjectures regarding the identities of the complements labelled by $K_{3*3}$ and $K_{3,3}$. 
Firstly, we conjecture that $K_{3*3}$ is the complement of a \emph{$2$-cable of the $2$-twist spun trefoil} (see \cite{Teragaito-fibered2KnotsLens} for the definition of cabling in the context of $2$-knots). 
Second, $K_{3,3}$ is the complement of a \emph{$1$-roll spun Figure-8} \cite{fox-rolling}. 
Evidence for both of these claims come from an analysis of their fundamental groups.
In the case of $K_{3*3}$, its fundamental group is
\[
	\pi_1\cong\mathbb{Z}\rtimes_{\varphi}(\mathbb{Z}_3*\mathbb{Z}_3),
\]
where $\varphi(1)=\psi\in\mathrm{Aut}(\mathbb{Z}_3*\mathbb{Z}_3)$ with $\psi(a)=b$ and $\psi(b)= a^{-1}$. 
This aligns with what we would expect to see from a $2$-cable of the $2$-twist spun of the trefoil---a fibered $2$-knot with fibre $(L(3,1)\# L(3,1))^\circ$.

For $K_{3,3}$, one of the triangulations has fundamental group with presentation
\[
	\langle a,b\,\vert\, aba^{-1}bab^{-1}=1,\,a^3b^3=1\rangle,
\]
which can be rewritten as
\[
	\langle a,b\,\vert\,aba=bab\,a^3=b^3\rangle,
\]
which was shown to be the fundamental group of the $1$-roll spun Figure-8 \cite{fox-rolling}.
It may be possible to verify these claims constructively in the same manner as with the $2$-twist spun examples, however the limited construction algorithms currently available make this strategy difficult to realise.
Developing algorithms allowing us to triangulate arbitrary $n$-twist $m$-roll spun knots is a point for further research.

The triangulations are available to study via a \emph{Regina} data file in the GitHub repository (\url{https://github.com/raburke/Dim4Census/}).

\begin{table}[h]
	\caption{Table of $4$-pentachoron $2$-knot complements}
	\label{tab:4p2Knots}
	\begin{tabular}{lr}
		\toprule
	Label & \# Triangulations \\
	\midrule
	$S^1\times B^3$ & $1\,754$\\
	Cappell--Shaneson Trace 0 & 38\\
	Cappell--Shaneson Trace 1 & 2\\
	Cappell--Shaneson Trace 2 & 1\\
	$\tau_2(3_1)$ & 3\\
	$\tau_2(4_1)$ & 1\\
	$K_{3*3}$ & 1\\
	$K_{3,3}$ & 8\\
	\bottomrule
	\end{tabular}
\end{table}

A preliminary search also revealed that out of the $405\,188$ $6$-pentachoron triangulations of the $4$-sphere, $256$ of them contained a $4$-pentachoron subcomplex corresponding to a `cut open' $2$-knot complement (in the same vein as in the case of $C$ and $Q$). A point of future work is to determine how difficult these particular spheres are to simplify.

In terms of the $6$-pentachoron census, as of the time of writing we have only carried out a very coarse preliminary filtering and classification. The ideal $6$-pentachoron census contains some $2\,787\,568$ triangulations. Of these we identified $1\,088\,164$ triangulations homeomorphic to $S^1\times B^3$ and $8\,467$ candidates for non-trivial $2$-knot complements. Using the new \texttt{spun} function in \emph{Regina} we were able to identify 7 PL-homeomorphic triangulations of the ($0$-twist) spun trefoil, $\tau_0(3_1)$. Similarly, we found zero triangulations PL-homeomorphic to $\tau_0(4_1)$ (and our current best efforts to obtain a small triangulation of $\tau_0(4_1)$ yields a triangulation with 16 ideal pentachora). In light of these last two points and the observations from the $4$-pentachoron ideal census---namely the triangulations of $\tau_2(3_1)$ and $\tau_2(4_1)$---we conclude with the following question. 

\begin{question}
	Does the complement of $\tau_2(K(p,q))$ always require fewer pentachora to triangulate than $\tau_0(K(p,q))$?
\end{question}

We hope for the preliminary results and ideas discussed in this section to be the subject of future work dedicated specifically to triangulations of $2$-knot complements and related constructions.

{
\interlinepenalty=10000 
\bibliography{references}
}
\end{document}